\documentclass[12pt]{amsart}

\usepackage[all]{xy}
\usepackage{graphics}
\usepackage{color}
\usepackage[T1]{fontenc}
\usepackage{parskip}
\usepackage[colorlinks]{hyperref}
\usepackage{enumitem}

\newcommand{\B}[1]{{\mathbf #1}}
\newcommand{\C}[1]{{\mathcal #1}}
\newcommand{\F}[1]{{\mathfrak #1}}

\newtheorem*{theorem}{Theorem}

\newtheorem{lemma}{Lemma}

\theoremstyle{definition}

\theoremstyle{remark}
\newtheorem*{remark}{Remark}

\numberwithin{figure}{section}
\numberwithin{table}{section}
\numberwithin{equation}{section}



\newcommand{\OP}{\operatorname}

\begin{document}

\title[Finite index subgroups in Chevalley groups are bounded]
{Finite index subgroups in Chevalley groups are bounded:\\
an addendum to ``On bi-invariant word metrics''}
\author{\'Swiatos\l aw R. Gal}
\address{Uniwersytet Wroc\l awski}
\email{Swiatoslaw.Gal@math.uni.wroc.pl}
\author{Jarek K\k{e}dra}
\address{University of Aberdeen and University of Szczecin}
\email{kedra@abdn.ac.uk}
\author{Alexander A. Trost}
\address{University of Aberdeen}
\email{r01aat17@abdn.ac.uk}


\begin{abstract}
We prove that finite index subgroups in $S$-arithmetic Chevalley
groups are bounded.
\end{abstract}

\maketitle

\section{Introduction}
A group $G$ is called {\em bounded} if every conjugation invariant norm on $G$
has finite diameter. Examples of bounded groups include $\OP{SL}(n,\B Z)$ for
$n\geq 3$, $\OP{Diff}_0(M)$, where $M$ is a manifold of dimension different
from $2$ and $4$, the commutator subgroup of Thompson's group $F$ and many
others.  A finite index subgroup of a bounded group does not have to be
bounded.  The simplest example is the infinite cyclic subgroup of the infinite
dihedral group. The purpose of this note is to prove the following result.

\begin{theorem}\label{T:main}
Let $G$ be a Chevalley group over the ring of $S$-integers in a number field
$\B k$ constructed from a root system whose irreducible components all have
rank at least $2$.  If $H\leq G$ is a subgroup of finite index then it is
bounded.
\end{theorem}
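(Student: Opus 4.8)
The plan is to reduce the statement to bounding a single conjugation-invariant norm on a finite-index normal subgroup of $G$, and then to combine the Chevalley commutator relations with bounded generation of congruence subgroups. Write $R$ for the ring of $S$-integers of $\B k$ and $x_\alpha\colon R\to G$ for the root subgroup homomorphisms.

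First I would replace $H$ by its normal core $N=\bigcap_{g\in G}gHg^{-1}$, a finite-index normal subgroup of $G$ contained in $H$. A coset argument reduces the theorem to showing that $N$ is bounded as an abstract group: if $\nu$ is a conjugation-invariant norm on $H$, then $\nu|_N$ is $N$-conjugation-invariant, hence bounded on $N$ by assumption, and $H$ is covered by finitely many $N$-cosets. So fix an $N$-conjugation-invariant norm $\nu$ on $N$; the task is to bound it. Since $G/N$ is finite and each $x_\alpha(R)\cong(R,+)$ is finitely generated abelian, there is a nonzero ideal $\F a\subseteq R$ with $x_\alpha(\F a)\subseteq N$ for every root $\alpha$.

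The core estimate is that $\nu$ is uniformly bounded on the deep root subgroups. Because every irreducible component of the root system has rank at least $2$, each root $\alpha$ can be written as $\alpha=\beta+\gamma$ with $\beta,\gamma$ roots and $\beta,\gamma\neq\pm\alpha$ (when $\alpha$ is simple, take an adjacent simple root $\alpha'$ and use $\alpha=(\alpha+\alpha')-\alpha'$). The Chevalley commutator formula gives $[x_\beta(s),x_\gamma(t)]=x_\alpha(\pm st)\cdot\prod x_{p\beta+r\gamma}(\dots)$, the remaining factors being indexed by $p,r\geq1$ with $p+r\geq3$; combined with the elementary inequality $\nu([u,v])\leq 2\min\{\nu(u),\nu(v)\}$ for $u,v\in N$, and with the fact that the finitely generated ideal $\F a^2$ is a bounded sum of products $ab$ ($a,b\in\F a$), this yields $\sup_{q\in\F a^2}\nu(x_\alpha(q))<\infty$ for every $\alpha$ — the stray higher commutator terms being absorbed by an induction on the root system (on height among positive roots, where the extra factors lie in strictly taller root subgroups, and symmetrically for negative roots; the cross terms appearing for simple roots carry arguments divisible by larger powers of $\F a$ and fit into the same induction). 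The same formula shows that conjugating a deep root element $x_\delta(q)$, $q\in\F a^2$, by any $x_\mu(r)$ with $\mu\neq\pm\delta$ and arbitrary $r\in R$ produces a product of boundedly many deep root elements, hence again has $\nu$ uniformly bounded.

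It remains to promote this to all of $N$. The relative elementary subgroup $E:=E(R,\F a^2)$ — the normal closure in $G$ of $\bigcup_\alpha x_\alpha(\F a^2)$ — lies in $N$ and, since the rank is at least $2$, has finite index in $G$; it is generated by the elements $z_\alpha(q,r):=x_{-\alpha}(r)x_\alpha(q)x_{-\alpha}(-r)$ ($q\in\F a^2$, $r\in R$), and by relative bounded generation of congruence subgroups of higher-rank $S$-arithmetic groups it is a product of boundedly many subgroups of this kind. The point is that $\nu(z_\alpha(q,r))$ is bounded uniformly in $q$ and $r$: writing $x_\alpha(q)=\prod_j[x_\beta(a_j'),x_\gamma(b_j)]\cdot(\text{higher})$ as above with $\beta,\gamma\neq\pm\alpha$, conjugation by $x_{-\alpha}(r)$ never meets an opposite root subgroup, so it turns each factor into a bounded product of deep root elements, and the commutator estimate then bounds $\nu(z_\alpha(q,r))$. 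Thus $\nu$ is bounded on the bounded product $E$, hence on $E$, hence on $N$, hence on $H$. I expect the genuine obstacle to be exactly this last step: the generators $z_\alpha(q,r)$ are ``torus-like'' and are \emph{not} products of deep root subgroups, and it is precisely the rank-$\geq2$ decomposition $\alpha=\beta+\gamma$ with both summands different from $\pm\alpha$ — converting the dangerous conjugation by an opposite root into harmless ones — that brings them under control; assembling the bounded-generation input and carrying out the bookkeeping for the multiply-laced systems and for $G_2$ is where the real work sits.
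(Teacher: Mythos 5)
Your overall strategy --- pass to the normal core, use the Chevalley commutator formula together with the rank $\geq 2$ hypothesis to bound the norm on root elements whose arguments lie in a deep ideal, then finish with a bounded generation theorem --- has the same skeleton as the paper's, and your core estimate is essentially the paper's Lemma~\ref{L:bdness arithm root-groups} (the paper organises the absorption of the stray factors in $\B B_2$ and $\B G_2$ by treating long roots before short ones rather than by height, but both orderings work; note also that your version works with ideals of $\C O_S$ directly, which lets you skip the paper's Lemma~\ref{L:bdness S-arithm root-groups} trick of clearing denominators with the torus elements $h_\alpha(u^{-k})$). Where you genuinely diverge is the endgame. The paper uses only Tavgen$'$s \emph{absolute} bounded generation of $E(\Phi,\C O_S)$ by root subgroups and then derives the needed relative statement by hand: write $h=e_1\cdots e_m$, split each $e_i=h_it_i$ along a finite set of coset representatives of $H\cap E_{\alpha_i}$ in $E_{\alpha_i}$, and shuffle, so that $h$ is a bounded product of $G$-conjugates of elements of $H\cap E_{\alpha_i}$ times an element of a fixed finite set; boundedness of those conjugates is exactly Lemma~\ref{L:bdness S-arithm root-groups}. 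You instead invoke bounded generation of the relative elementary subgroup $E(\Phi,R,\F a^2)$ by the generators $z_\alpha(q,r)$. That route can be made to work, but it outsources the hardest step to a much heavier and less classical input than Tavgen$'$s theorem, for which you would need a precise reference valid for every root system of rank $\geq 2$ and every $S$; the coset-shuffling argument gives you the relative bounded generation for free from the absolute one, which is precisely why the paper proceeds that way.

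Two local repairs are needed. First, $(\C O_S,+)$ is \emph{not} finitely generated once $S$ contains finite places (e.g.\ $\B Z[1/p]$); the existence of a nonzero ideal $\F a$ with $x_\alpha(\F a)\subseteq N$ should instead be justified by taking $\F a=n\C O_S$ with $n$ the exponent of the finite group $x_\alpha(\C O_S)/(N\cap x_\alpha(\C O_S))$. Second, your bound on $\nu(z_\alpha(q,r))$ is off by one power of $\F a$: conjugating $x_\gamma(b)$ by $x_{-\alpha}(r)$ produces factors $x_\delta(Cr^ib^l)$ with $l\geq1$, and for $l=1$ the argument only lies in $b\,\C O_S$, which is contained in $\F a^2$ only when $b$ itself lies in $\F a^2$; so the fixed factors $b_j$ in your decomposition of $q$ must be taken inside $\F a^2$, meaning you actually bound $x_\alpha(\F a^3)$ and should work with $E(\Phi,R,\F a^3)$. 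With those fixes, and the routine reduction of the reducible case to the irreducible one via the finite central extension by $\prod_iG(\Phi_i,\C O_S)$ (which you omit but the paper spells out), your argument goes through.
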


The above theorem generalises the main result of the paper \cite{MR2819193}.
The proof is similar with an additional ingredient being an explicit form of bounded
generation of a finite index subgroup of a boundedly generated group.
We also correct a couple of mistakes from the original proof.

\subsection*{Acknowledgement} 
The work was partially funded by a Leverhulme
Trust Research Project Grant RPG-2017-159 and the Polish National Science Centre grant
2017/27/B/ST1/01467.

\section{Definitions and known facts}
\subsection{Norm}\label{SS:norm}
A {\em conjugation invariant norm} of a group $G$ is a nonnegative function 
$\nu\colon G\to \B R$
such that the following conditions
\begin{enumerate}
\item $\nu(g)=0$ if and only if $g=1$,
\item $\nu\left( g^{-1} \right)=\nu(g)$,
\item $\nu(gh) \leq \nu(g)+\nu(h)$,
\item $\nu\left( h^{-1}gh \right)=\nu(g)$
\end{enumerate}
hold for every $g,h\in G$.

\subsection{Bounded group}\label{SS:bounded}
A group $G$ is called {\em bounded} if the diameter 
$$
\OP{diam}(G,\nu)=\sup\{\nu(g)\ |\ g\in G\} < \infty
$$ 
for every conjugation invariant norm $\nu$.  If $G$ is generated by finitely
many conjugacy classes then its boundedness is equivalent to the boundedness of
any conjugation invariant {\em word} norm \cite[Section 2.C]{MR2819193}.

\subsection{Arithmetic group}\label{SS:arithmetic}
Let $\B k$ be a number field (i.e. a finite extension of $\B Q$) and let 
$V_{\B k}$ denote the set of equivalence classes of valuations of $\B k$. Let
$S\subset V_{\B k}$ be a finite set containing all archimedean valuations.
The ring of $S$-integers is defined by
$$
\C O_S=\{x\in \B k\ | \ v(x)\geq 0 \text{ for all }v\notin S\}.
$$
Let $\B G$ be a connected algebraic group defined over $\B k$ with a fixed
embedding $\B G\to \OP{GL}(r)$.  A  subgroup of $\B G$ that is commensurable
with $\B G(\C O_S)= \B G \cap \OP{GL}(r,\C O_S)$ is called
an {\em $S$-arithmetic group} \cite[page 61]{MR1090825}.
More generally, an $S$-arithmetic group can be defined over any global field.

\subsection{Chevalley group \cite{eom-chevalley,MR3616493}}\label{SS:chevaley} 
Let $\F g$ be a semi-simple complex Lie algebra over $\B C$ with Cartan subalgebra 
$\F h$. Let ${\Phi}$ be a root system of $\F g$ with respect to $\F h$ with
simple roots
$\{\alpha_1,\dots,\alpha_k\}\subset \Phi$.
Let 
$$
\{H_{\alpha_i} (1\le i\le k); X_{\alpha} (\alpha\in\Phi)\}
$$ 
be a Chevalley basis of the algebra $\F g$, and let $\F g_{\B Z}$ be its linear
envelope over $\B Z$. Let ${\varphi}\colon \F g\to \F g\F l(r,\B C)$ be a faithful
representation.  There is a lattice
$L\subset \B C^r$ which is invariant with respect to all operators of the form
$\varphi(X_{\alpha})^m/m!$, where $m\in \B N$. If $\B k$ is
an arbitrary field then
homomorphisms $x_{\alpha}\colon \B (\B k,+) \to {\rm GL}(L\otimes {\B k})$ of the additive
group of $\B k$ into $\OP{GL}(L\otimes {\B k})$ are defined and given by the formulas
$$
x_{\alpha}(t) = \sum_{m=0}^\infty t^m\frac{\varphi(X_{\alpha})^m}{m!}.
$$ 
The subgroup $G(\Phi,\B k)\subset \OP{GL}(L\otimes {\B k})$ generated by 
$\{x_\alpha(t):\alpha\in\Phi, t\in~\B k\}$,
is called the {\em Chevalley group}
associated with the root system $\Phi$, the representation $\varphi$ and the field 
$\B k$. 

\subsection{Chevalley's commutator formula}\label{SS:steinberg}
The root elements of the Chevalley group $G(\Phi,\B k)$ satisfy the
following relations:
\begin{align*}
x_{\alpha}(s)x_{\alpha}(t) &= x_{\alpha}(s+t)\\
\left[ x_{\alpha}(s),x_{\beta}(t) \right] &= \prod_{i,j>0} x_{i\alpha+j\beta}(C_{i,j}t^is^j)
\end{align*}
where the product is taken in the increasing order of $i+j>0$ and
$C_{i,j}\in \{\pm 1,\pm 2,\pm 3\}$ \cite[Lemma 32.5, Propositions 33.3--5]{MR0396773}.

\subsection{Semisimple elements of Chevalley groups}\label{SS:semisimple}
Besides the root elements one also has semisimple elements of $G(\Phi,\B k)$
which are defined as follows. Let $\alpha\in\Phi$ be a root, $0\neq t\in\B k$ and set 
$$
h_{\alpha}(t):=x_{\alpha}(t)x_{-\alpha}(-t^{-1})x_{\alpha}(t)x_{\alpha}(-1)x_{-\alpha}(1)x_{\alpha}(-1).
$$
They are related to the root elements as follows.
Let $\alpha,\beta\in\Phi$ be roots and let
$\langle\beta,\alpha\rangle:=2\frac{(\beta,\alpha)}{(\alpha,\alpha)}$ be a
corresponding Cartan integer. Then the following
equation holds 
$$
h_{\alpha}(t)x_{\beta}(u)h_{\alpha}(t)^{-1}=x_{\beta}(t^{\langle\beta,\alpha\rangle}u).
$$ 
where $u,t\in\B k$ and $t\neq 0$ \cite[Lemma~20(c)]{MR3616493}. 
\subsection{$S$-arithmetic Chevalley groups}\label{SS:S-chevalley}
Let $G(\Phi,\B k)$ be a Chevalley group over a number field $\B k$.
We consider the $S$-arithmetic group $G(\Phi,\C O_S)$ 
over the ring of $S$-integers $\C O_S\subset \B k$.
Let $E(\Phi,\C O_S)\subset G(\Phi,\C O_S)$ be the subgroup generated by the
root elements $x_{\alpha}(t)$, where $\alpha \in \Phi$ and $t\in \C O_S$.  It
is known that it is a subgroup of finite index.  
We shall call it an {\em elementary} Chevalley group.


\subsection{Tavgen$^{\prime}$s theorem}\label{SS:tavgen}
Let $\Phi$ be an irreducible root system of rank at least $2$.  If $G=E(\Phi,\C O_S)$ is an
elementary $S$-arithmetic Chevalley group then there exists a number $m\in \B N$ such that
every element $g\in G$ can be written as
$g = x_{\alpha_1}(t_1) \cdots x_{\alpha_m}(t_m)$, for some $t_i \in \C O_S$
\cite[Theorem~A]{MR1044049}. We say that $G$ has {\em bounded
generation with respect to root elements}. 

\section{Proof}

Let us recall the following observation which will be frequently used in
this section.
If a group $H$ contains a finite index bounded subgroup $H'$, then $H$
itself is bounded. Indeed, for any conjugation-invariant norm $\nu$ defined on
$H$ one has for a given finite set of representatives $T$ of left cosets of
$H'$ in $H$ that 
$$
\OP{diam}(H,\nu)\leq \OP{diam}(H',\nu|_{H'})+\max\{\nu(t)|t\in T\}<\infty.
$$ 
This implies that it suffices to prove the statement for $G=E(\Phi,\C O_S)$, 
the elementary Chevalley group instead of the full one. Next observe that each
finite index subgroup $H$ of $G$ contains a normal subgroup $H'$ of finite
index.  Thus it suffices to consider only the case $H$ of finite index and
normal in $G$.

\begin{lemma}\label{L:bdness arithm root-groups}
Let $G=E(\Phi,\C O)$ be an arithmetic Chevalley group of rank at least $2$
constructed from the irreducible root system $\Phi$ and $H$ a finite index
normal subgroup of $G$.  Further let $\alpha$ be a root and $\nu$ a
conjugation-invariant norm on $H$.  Then the group $\{x_{\alpha}(a)|a\in\C
O\}\cap H$ is bounded with respect to $\nu$. 
\end{lemma}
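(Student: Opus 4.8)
The key point is that a single root subgroup $X_\alpha = \{x_\alpha(a) : a \in \mathcal O\}$ is isomorphic to the additive group $(\mathcal O, +)$, which is a finitely generated abelian group of rank $[\mathbf k : \mathbb Q] + |S| - 1$ or so — certainly not bounded by itself. The boundedness must come from the conjugation action available inside $H$. Since $H$ has finite index in $G$ and $\Phi$ has rank at least $2$, there is a root $\beta$ not proportional to $\alpha$, and the Chevalley commutator formula lets us move the $X_\alpha$-direction around. The plan is to exploit the semisimple elements $h_\gamma(t)$, which act on $x_\beta(u)$ by multiplying the parameter by $t^{\langle\beta,\gamma\rangle}$, to produce a ``contracting'' conjugation on the relevant root subgroup, so that an arbitrarily large element of $X_\alpha \cap H$ can be written as a product of a bounded number of conjugates of a fixed element.

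Concretely, first I would fix the setup: since $[G:H] = n < \infty$ and $H \trianglelefteq G$, for every root $\gamma$ and every $a \in \mathcal O$ we have $x_\gamma(a)^{n!} \in H$ — more usefully, there is a fixed finite-index subideal-like constraint, so $x_\gamma(Na) \in H$ for all $a$ where $N = n!$ (or the exponent of $G/H$). Thus $X_\gamma \cap H \supseteq \{x_\gamma(Na) : a \in \mathcal O\}$ has finite index in $X_\gamma$, and it suffices to bound $\nu$ on the larger subgroup $\{x_\gamma(Na)\}$ — or rather, it suffices to bound $\nu$ on a generating-by-few-conjugacy-classes subgroup of $X_\alpha\cap H$. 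Next, pick a root $\beta$ with $\langle\beta,\alpha\rangle \neq 0$ (possible since $\Phi$ is irreducible of rank $\geq 2$; in fact one can often take $\beta$ with $\langle\beta,\alpha\rangle = \pm 1$, and $\alpha+\beta$ or $\alpha-\beta$ a root). The commutator formula $[x_\beta(s), x_{-\alpha}(t)]$, or an appropriate pair, expresses $x_{\beta'}(C s t)$ (for some root $\beta'$ and constant $C$) as a commutator, hence as a product of two conjugates of elements of fixed $\nu$-value — this is the standard mechanism showing root elements with parameter in a product ideal have bounded norm.

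The heart of the argument is then the following: using the relation $h_\gamma(t) x_\beta(u) h_\gamma(t)^{-1} = x_\beta(t^{\langle\beta,\gamma\rangle} u)$ with a suitable $t \in \mathcal O_S^\times$, together with the additivity $x_\beta(u)x_\beta(u') = x_\beta(u+u')$, one shows that $x_\beta(u)$ and $x_\beta(t^{\langle\beta,\gamma\rangle} u)$ differ by conjugation inside $H$ (after replacing $t$ by $t^N$ so that $h_\gamma(t^N) \in H$ — note $h_\gamma(t)$ is a product of root elements, so some power lies in $H$). Choosing $t$ to be a unit with $|t^{\langle\beta,\gamma\rangle}| > 1$ at one archimedean place, one gets that the subgroup $\{x_\beta(u)\}$ is ``distorted'' in the sense needed: every element is a product of a bounded number of $H$-conjugates of $x_\beta(u_0)$ for a fixed $u_0$, using a base-$t$ type expansion of the coefficient $u \in \mathcal O_S$, exactly as in the commutator-and-semisimple-element arguments of \cite{MR2819193}. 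Finally, transport this boundedness from $X_\beta \cap H$ (or $X_{\beta'}\cap H$) back to $X_\alpha \cap H$ using another commutator relation linking the two root subgroups, which is available because the rank is at least $2$ and the root system is irreducible, so any two root subgroups are connected by a chain of such commutator relations.

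The main obstacle I anticipate is the bookkeeping around the finite-index subgroup $H$: the clean relations (commutator formula, semisimple conjugation) hold in $G$, but one must ensure the conjugating elements and the resulting identities actually take place inside $H$, which forces passing to $N$-th powers of parameters and of units and checking that one still has enough room — in particular that a unit $t$ with the required archimedean size can be found with $h_\gamma(t)^? \in H$, and that the base-$t$ expansion of an arbitrary element of $\mathcal O_S$ still has boundedly many digits after these modifications. A secondary subtlety is that $\mathcal O_S^\times$ may be finite (e.g. $\mathbf k = \mathbb Q$, $S = \{\infty\}$, where $\mathcal O_S = \mathbb Z$ and $\mathcal O_S^\times = \{\pm 1\}$), so the ``contracting unit'' trick degenerates; in that case $\mathcal O_S = \mathbb Z$ is already singly generated and the commutator relations alone suffice, so one should treat that case (or more precisely, handle the multiplication by a fixed non-unit scalar via $h_\gamma$, accepting that it only rescales rather than contracts, which is still enough to reduce an arbitrary integer coefficient to bounded size by the commutator trick). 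I would organize the proof to first dispose of the coefficient-reduction step abstractly and then invoke it uniformly.
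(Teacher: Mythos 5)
Your central mechanism does not work, and the lemma is in fact proved by the device you only mention in passing. The ``heart'' of your plan is to use $h_\gamma(t)x_\beta(u)h_\gamma(t)^{-1}=x_\beta(t^{\langle\beta,\gamma\rangle}u)$ with a unit $t$ and a base-$t$ expansion of $u$ to write every $x_\beta(u)$ as a product of a \emph{bounded} number of $H$-conjugates of a fixed element. First, the lemma is over the ring of integers $\C O$, not $\C O_S$, so there is no $S$-unit supply, and $\C O^\times$ may be finite ($\B Z$, imaginary quadratic fields); your fallback of conjugating by $h_\gamma(t)$ with $t$ a non-unit takes you outside $E(\Phi,\C O)$, and $\nu$ is only defined on $H$ and only invariant under conjugation by elements of $H$, so such conjugations cannot be used at all. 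Second, even when $\C O^\times$ is infinite, the expansion step is false: the number of ``digits'' in any expansion $u=\sum_i d_i t^{k_i}$ grows with $u$ (no number field has the property that every algebraic integer is a sum of a bounded number of unit multiples of a fixed element -- this is the Jarden--Narkiewicz phenomenon), so conjugation by $h_\gamma(t^N)\in H$ only tells you that $\nu$ is constant along the multiplicative orbit $u\mapsto t^{N\langle\beta,\gamma\rangle}u$, which gives no uniform bound on $\{x_\alpha(a)\,|\,a\in\C O\}\cap H$.

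What does suffice -- and is the entire content of the paper's proof -- is the commutator bound you relegate to a side remark: choosing $\beta,\gamma$ in a rank-$2$ subsystem with $\alpha=\beta+\gamma$ and fixing $s$ (say $s=p\xi$ with $x_\beta(p\xi)\in H$, $\xi$ running over a $\B Z$-basis of $\C O$), Chevalley's formula gives $x_\alpha(Cst)=[x_\beta(s),x_\gamma(t)]$, whence $\nu(x_\alpha(Cst))\leq 2\nu(x_\beta(s))$ \emph{uniformly} in $t$; since $\{Cst\}$ already sweeps a finite-index subgroup of $(\C O,+)$, division with remainder and finitely many coset representatives finish the argument, with no semisimple elements needed. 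Your proposal also ignores the genuine case distinction this forces: when $\alpha$ is a short root of $\B B_2$ or $\B G_2$, the commutator $[x_\beta(s),x_\gamma(t)]$ has extra factors $x_{\beta+2\gamma}(\cdot)$ (and $x_{2\beta+\gamma}(\cdot)$), and the paper must first establish the bound for long roots and then absorb these correction terms; a chain of commutator relations ``connecting root subgroups'' does not by itself handle this. So the proposal as written has a genuine gap at its main step, although the correct tool is present in it in embryonic form.
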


\begin{proof}
Let $\xi_0=1,\xi_1,\ldots, \xi_r\in \C O$ be a basis of $\C O$ over $\B Z$,
i.e. $\C O$ splits as a direct product $\bigoplus_{i=0}^r\B Z\xi_i$.
Let $p\in\B N$ be the smallest positive integer such that 
the elements $x_{\delta}(p\xi_l)\in H$ for all $\delta\in\Phi$ and $0\leq l\leq r$.
For $a\in\C O$ observe that there are integers $m_0,\dots,m_r$ such that $a=m_0+m_1\xi_1+\dots+m_r\xi_r$. 
Using division with remainder we can find integers 
$n_l$ and $r_l$ with $0\leq r_l<p$ such that $m_l=pn_l+r_l$ for $0\leq l\leq r.$ But then we get 
$$
x_{\alpha}(a)=(x_{\alpha}(p\xi_0)^{n_0}\cdots x_{\alpha}(p\xi_r)^{n_r})x_{\alpha}(r_0\xi_0+\dots+r_r\xi_r).
$$
Observe that for the second factor there are only finitely many possibilities and thus
it suffices to show that the cyclic subgroup generated by $x_{\alpha}(p\xi_l)$ for $0\leq l\leq r$ 
is bounded with respect to $\nu$. Using the same division with remainder trick for these subgroups again, 
it actually suffices to find a non-zero multiple $v$ of $p$ such that the cyclic subgroup generated by 
$x_{\alpha}(v\xi_l)$ for $0\leq l\leq r$ is bounded with respect to $\nu$. 
In the following let $\xi$ be one of the $\xi_l.$

There exists a subsystem $\Psi\subset \Phi$
isomorphic to one of  $\B A_2$, $\B B_2$ or $\B G_2$ such that $\alpha \in \Psi$.
Moreover, if $\alpha \in \B A_2$ there exist $\beta,\gamma\in \B A_2$ such that 
$\alpha = \beta + \gamma$ and that no other positive combination
of $\beta$ and $\gamma$ is a root. The same holds if $\alpha \in \B B_2$
or $\alpha \in \B G_2$ is a long root.
It follows from~(\ref{SS:steinberg})
that 
$$
x_{\alpha}(p\xi)^{Cpn} 
= \left[ x_{\beta}(p\xi),x_{\gamma}(pn) \right]
$$
for any $n\in \B Z$ and some $C\in \{\pm 1,\pm 2,\pm 3\}$.
In particular, 
$$
\nu(x_{\alpha}(Cp^2\xi)^n) \leq 2\nu(x_{\beta}(p\xi))
$$
which implies that $\nu(x_{\alpha}(p\xi)^n)$ is bounded if $\alpha $ is either a root
contained in $\B A_2$ or a long root in $\B B_2$ or $\B G_2$. 
In particular, this implies that the set $\{x_{\alpha}(pa)^n|n\in\B N\}$ is
bounded for all $a\in\C O$ if $\alpha$ is a long root in $\B G_2$ or $\B B_2.$

If $\alpha \in \B B_2$ is a short root then there exist $\beta,\gamma\in \B B_2$
such that $\alpha = \beta + \gamma$ and $\beta + 2\gamma$ is a long root
and no other positive combination of $\beta$ and $\gamma$ is a root.
Applying (\ref{SS:steinberg}) again, we get that
\begin{align*}
\left[ x_{\beta}(p\xi),x_{\gamma}(pn) \right] &=
x_{\alpha}(p^2\xi Cn) x_{\beta+2\gamma}(p^3\xi C'n^2)\\
x_{\alpha}(p\xi)^{Cpn} &= 
\left[ x_{\beta}(p\xi),x_{\gamma}(pn) \right] 
x_{\beta+2\gamma}(-p^2\xi)^{C'pn^2}.
\end{align*}
which implies that 
$$\nu(x_{\alpha}(p^2C\xi)^n)\leq2\nu(x_{\beta}(p\xi))+\nu(x_{\beta+2\gamma}(p\xi)^{-C'p^2n^2}).$$ 
However note that $\beta+2\gamma$ is a long root and hence 
we already know that $\nu(x_{\beta+2\gamma}(p\xi)^{-C'p^2n})$ is bounded independently from $n.$

Now if $\alpha\in\B G_2$ is a short root, then there are $\beta,\gamma\in\B G_2$ such that $\beta+\gamma=\alpha$ and further 
$\beta+2\gamma$ and $2\beta+\gamma$ are both long roots and no other positive combination of $\beta$ and $\gamma$ are roots. 
Applying (\ref{SS:steinberg}) again, we get that
\begin{align*}
\left[ x_{\beta}(p\xi),x_{\gamma}(pn) \right] &=
x_{\alpha}(p^2\xi Cn) x_{\beta+2\gamma}(p^3\xi C'n^2)x_{2\beta+\gamma}(p^3\xi^2 C''n)\\
x_{\alpha}(p\xi)^{Cpn} &= 
\left[ x_{\beta}(p\xi),x_{\gamma}(pn) \right] 
x_{\beta+2\gamma}(-p^3\xi C'n^2)x_{2\beta+\gamma}(-p^3\xi^2 C''n).
\end{align*}
which implies that 
$$\nu(x_{\alpha}(p^2C\xi)^n)\leq2\nu(x_{\beta}(p\xi))+\nu(x_{\beta+2\gamma}(p\xi)^{-C'p^2n^2})+\nu(x_{2\beta+\gamma}(p\xi^2)^{-p^2C''n}).$$ 
Now the terms $\nu(x_{\beta+2\gamma}(p\xi)^{-C'p^2n^2})$ and $\nu(x_{2\beta+\gamma}(p\xi^2)^{-p^2C''n})$ are bounded as $\beta+2\gamma$ and $2\beta+\gamma$
are long roots in $\B G_2$. This concludes the proof.
\end{proof}

\begin{lemma}\label{L:bdness S-arithm root-groups}
Let $G=E(\Phi,\C O_S)$ be an S-arithmetic Chevalley group of rank at least $2$
constructed from the irreducible root system $\Phi$ and $H$ a finite index
normal subgroup of $G$. Further let $\nu$ be a conjugation-invariant norm on
$H$.  Then the set $$\{yx_{\alpha}(a)y^{-1}|a\in\C O_S,y\in
G,\alpha\in\Phi\}\cap H$$ is bounded with respect to $\nu$. 
\end{lemma}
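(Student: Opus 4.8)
The plan is to reduce the assertion to its special case without the outer conjugation by $y$, and then to deduce that case from Lemma~\ref{L:bdness arithm root-groups}. To remove the conjugation by $y$, fix a finite set $T\subset G$ such that every element of $G$ can be written $v\epsilon$ with $v\in H$ and $\epsilon\in T$ (a transversal for the cosets of $H$). For a conjugation-invariant norm $\mu$ on $H$ and $\epsilon\in T$ put $\mu_{\epsilon}(h):=\mu(\epsilon h\epsilon^{-1})$ for $h\in H$. Since $H$ is normal in $G$ and $\mu$ is conjugation-invariant on $H$, the function $\mu_{\epsilon}$ is again a conjugation-invariant norm on $H$ (for conjugation-invariance one uses that $\epsilon g\epsilon^{-1}\in H$ whenever $g\in H$). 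Now if $g=yx_{\alpha}(a)y^{-1}\in H$ with $y\in G$, write $y=v\epsilon$ with $v\in H$, $\epsilon\in T$; from $g,v\in H$ one gets $\epsilon x_{\alpha}(a)\epsilon^{-1}\in H$, hence $x_{\alpha}(a)\in\epsilon^{-1}H\epsilon=H$ by normality, and
$$
\nu(g)=\nu\bigl(v\,(\epsilon x_{\alpha}(a)\epsilon^{-1})\,v^{-1}\bigr)=\nu(\epsilon x_{\alpha}(a)\epsilon^{-1})=\nu_{\epsilon}(x_{\alpha}(a)).
$$
So it is enough to establish the statement $(\star)$: \emph{for every conjugation-invariant norm $\mu$ on $H$ the set $\{x_{\alpha}(a)\mid a\in\C O_S,\ \alpha\in\Phi\}\cap H$ is $\mu$-bounded}; then, applying $(\star)$ to the finitely many norms $\nu_{\epsilon}$ and taking the maximum of the resulting bounds, $\nu$ is bounded on the set in the lemma.

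To prove $(\star)$, let $p$ be the exponent of $G/H$, so that by additivity $x_{\delta}(pc)=x_{\delta}(c)^{p}\in H$ for all $\delta\in\Phi$ and $c\in\C O_S$. Since $\C O_S$ is a localization of the Dedekind domain $\C O$ at finitely many primes and the class group of $\C O$ is finite, we may choose $q\in\C O$ with $\C O[q^{-1}]=\C O_S$; then $q\in\C O_S^{\times}$. Consequently every $a\in\C O_S$ can be written $a=bu$ with $b\in\C O$ and $u\in\C O_S^{\times}$, and $Cp\,\C O_S=\{\,Cpbu\mid b\in\C O,\ u\in\C O_S^{\times}\,\}$ for any integer $C\ne0$. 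I also record a reduction used twice below: if $\mu(x_{\alpha}(c))\le D$ for all $c\in Cp\,\C O_S$, then $(\star)$ holds for this $\alpha$ and $\mu$. Indeed $x_{\alpha}^{-1}(H)$ is a finite-index subgroup of $(\C O_S,+)$ containing the finite-index ideal $Cp\,\C O_S$, so writing $c=c_{0}+\rho$ with $c_{0}\in Cp\,\C O_S$ and $\rho$ in a finite set of representatives $R_{\alpha}$ gives $x_{\alpha}(c)=x_{\alpha}(c_{0})\,x_{\alpha}(\rho)$ with both factors in $H$, whence $\mu(x_{\alpha}(c))\le D+\max_{\rho\in R_{\alpha}}\mu(x_{\alpha}(\rho))$ for every $c$ with $x_{\alpha}(c)\in H$.

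Now fix $\alpha\in\Phi$ and, exactly as in the proof of Lemma~\ref{L:bdness arithm root-groups}, choose a subsystem $\Psi\subset\Phi$ of type $\B A_2$, $\B B_2$ or $\B G_2$ containing $\alpha$, together with $\beta,\gamma\in\Psi$ satisfying $\beta+\gamma=\alpha$ and the usual control on the other positive combinations. I treat long roots first. If $\alpha$ is a long root, or $\Psi$ has type $\B A_2$, then Chevalley's commutator formula~(\ref{SS:steinberg}) gives $[x_{\beta}(pb),x_{\gamma}(u)]=x_{\alpha}(Cpbu)$ for some $C\in\{\pm1,\pm2,\pm3\}$. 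Writing the commutator as $x_{\beta}(pb)\cdot\bigl(x_{\gamma}(u)\,x_{\beta}(-pb)\,x_{\gamma}(u)^{-1}\bigr)$ displays it as a product of two elements of $H$: the first lies in $H$ because $pb\in p\,\C O_S$, and it has integral argument $pb\in\C O$; the second lies in $H$ by normality. Hence, writing $x_{\gamma}(u)=v'\epsilon'$ with $v'\in H$ and $\epsilon'\in T$ and using conjugation-invariance of $\mu$ on $H$,
$$
\mu(x_{\alpha}(Cpbu))\le\mu(x_{\beta}(pb))+\mu_{\epsilon'}(x_{\beta}(-pb)),
$$
and both terms are bounded by Lemma~\ref{L:bdness arithm root-groups} applied to $\mu$ and to $\mu_{\epsilon'}$ respectively (since $\pm pb\in\C O$); taking $D$ to be the sum of the two relevant maxima over the finitely many roots and over $\epsilon'\in T$, we get $\mu(x_{\alpha}(c))\le D$ for all $c\in Cp\,\C O_S$, and the reduction above yields $(\star)$ for long $\alpha$ and $\mu$. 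For a short root of $\B B_2$ or $\B G_2$ one instead solves for $x_{\alpha}(Cpbu)$ in the fuller commutator expansion of Lemma~\ref{L:bdness arithm root-groups}: the additional factors are root elements on the \emph{long} roots $\beta+2\gamma$ (and $2\beta+\gamma$) with argument in $p\,\C O_S$, so they lie in $H$ and are $\mu$-bounded by $(\star)$ for those long roots, already proved, while the commutator is bounded as above; again $\mu(x_{\alpha}(c))\le D'$ for $c\in Cp\,\C O_S$, and the reduction finishes. Letting $\alpha$ run over the finite set $\Phi$ proves $(\star)$, and with it the lemma.

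The two points I expect to require real care are the passage from conjugation by $G$ to conjugation by $H$ via the auxiliary norms $\mu_{\epsilon}$ — which is exactly where finiteness of the index enters — and the single identity $[x_{\beta}(pb),x_{\gamma}(u)]=x_{\alpha}(\pm pbu)$, which converts an $S$-integral argument into an integral one at the cost of a unit $u\in\C O_S^{\times}$; the remainder is the bookkeeping that keeps the finitely many auxiliary norms, the structure constants $C$, and the coset representatives $\rho$ under control.
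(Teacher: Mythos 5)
Your proof is correct, and it differs from the paper's in the one step that matters. Both arguments share two ingredients: a finite transversal $T$ of $H$ in $G$ together with the auxiliary conjugation-invariant norms $h\mapsto\nu(\epsilon h\epsilon^{-1})$, $\epsilon\in T$, to absorb the conjugating element $y$ (the paper's $\nu_t$ is exactly your $\mu_\epsilon$), and Lemma~\ref{L:bdness arithm root-groups} to control root elements with arguments in $\C O$. Where you diverge is the passage from $\C O_S$ to $\C O$. The paper writes $a=u^{-k}b$ with $\C O_S=\C O[u^{-1}]$ and conjugates by the semisimple element $h_\alpha(u^{-k})$, using $h_{\alpha}(t)x_{\alpha}(v)h_{\alpha}(t)^{-1}=x_{\alpha}(t^{2}v)$ from~(\ref{SS:semisimple}): since the set in the lemma is closed under $G$-conjugation and $H$ is normal, this reduces to integral arguments in one line, before the transversal is ever introduced, and the commutator formula is not used in this lemma at all. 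You instead strip off the conjugation first and then manufacture $x_{\alpha}(Cpbu)$ with $u\in\C O_S^{\times}$ from the commutator $[x_{\beta}(pb),x_{\gamma}(u)]$, which forces you to rerun the long-root/short-root case analysis of Lemma~\ref{L:bdness arithm root-groups} a second time and to add the congruence argument modulo the finite-index subgroup $Cp\,\C O_S$ of $(\C O_S,+)$. Your route is longer and repeats the $\B A_2/\B B_2/\B G_2$ bookkeeping, but it avoids the semisimple elements $h_\alpha$ entirely; the paper's route is shorter at the price of invoking~(\ref{SS:semisimple}).

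One step you should make explicit: Lemma~\ref{L:bdness arithm root-groups} is stated for a finite-index normal subgroup of $E(\Phi,\C O)$, whereas your $H$ is normal in $E(\Phi,\C O_S)$. To invoke it for "$\mu$ and $\mu_{\epsilon'}$" you must restrict these norms to $H\cap E(\Phi,\C O)$, which is finite-index and normal in $E(\Phi,\C O)$, and observe that $x_{\beta}(\pm pb)$ lies in that intersection; this is precisely how the paper deploys its $\nu_t$. It is also worth a sentence that $\C O_S/Cp\,\C O_S$ is finite (quotient of a ring of $S$-integers by a nonzero ideal), since your final congruence reduction rests on it. Neither point is a gap, only an elision.
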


\begin{proof}
We start with the observation that there exists an element $u\in \C O$ such
that $\C O_S = \C O[u^{-1}]$. Indeed, let $p_i\subseteq \C O$ be the ideal
corresponding to a non-archimedean valuation $v_i \in S$. Let $U = \prod_i
p_i$.  Since the number of classes of ideals of $\C O$ is finite, there exists
$n\in \B N$ and $u\in \C O$ such that $U^n  = (u)$. Then for any $x\in \C O_S$
we have that $u^kx \in \C O$ for some $k\in \B N$ which implies the claim.

Since there are only finitely many elements in $\Phi$, it suffices to
show the boundedness of the set 
$$
\{yx_{\alpha}(a)y^{-1}|a\in\C O_S,y\in G\}\cap H
$$
for a given root $\alpha.$ Since $\C O_S=\C O[u^{-1}]$, for
each element $a\in\C O_S$ there exists $k\in\B N$ and $b\in\C O$ such that
$a=u^{-k}b.$ By $\langle\alpha,\alpha\rangle=2$ and (\ref{SS:semisimple}) it follows that
$h_{\alpha}(u^{-k})x_{\alpha}(u^kb)h_{\alpha}(u^k)=x_{\alpha}(u^{-k}b)$ holds. 
Since $H$ is normal, it suffices to show boundedness for the set 
$$
\{yx_{\alpha}(a)y^{-1}|a\in\C O,y\in G\}\cap H.
$$
Let $T\subset G$ be a finite set of representatives of right cosets of $H$ in~$G$,
so that each $y\in G$ can be written as $y=ht$ for some $h\in H$ and $t\in T$. 
If $yx_{\alpha}(a)y^{-1}\in H$ for $a\in\C O$ then
$$
\nu(yx_{\alpha}(a)y^{-1})=\nu(h(tx_{\alpha}(a)t^{-1})h^{-1})=\nu(tx_{\alpha}(a)t^{-1}).
$$
Since the set $T$ is finite, it suffices to show that
$\{tx_{\alpha}(a)t^{-1}|a\in\C O\}\cap H$ is bounded for any given $t\in T.$
As $H$ is normal, the function 
$$
\nu_t\colon H\cap E(\Phi,\C O)\to \B R_{\geq0},
$$
$x\mapsto\nu(txt^{-1})$ defines a conjugation invariant norm on the finite
index, normal subgroup $H\cap E(\Phi,\C O)$ of $E(\Phi,\C O)$. Thus
Lemma~\ref{L:bdness arithm root-groups} yields that
$$
\{tx_{\alpha}(a)t^{-1}|a\in\C O\}\cap H
=t(\{x_{\alpha}(a)|a\in\C O\}\cap (H\cap E(\Phi,\C O)))t^{-1}
$$ 
is indeed bounded with respect to $\nu.$ 
\end{proof}

\subsection*{Proof of Theorem}
We first prove the special case for $G$ constructed from an irreducible
root system $\Phi$.

For each $\alpha\in\Phi$ let $E_{\alpha}:=\{x_{\alpha}(a)|a\in\C O_S\}$ be given. 
The group $H\cap E_{\alpha}$ has finite index in $E_{\alpha}$, 
so let $T_{\alpha}$ be a finite set of representatives of right cosets of
$H\cap E_{\alpha}$ in $E_{\alpha}.$ Choose $h\in H$. By \cite[Theorem
A]{MR1044049} there is an $m\in\B N$ (independent from $h$), $\alpha_i\in\Phi$
and $e_i\in E_{\alpha_i}$ (dependent on $h$) such that $h=e_1 \cdots e_m$.  
For each $i\leq m$ let $h_i\in E_{\alpha_i}\cap H$ and $t_i\in T_{\alpha_i}$ be
given such that $e_i=h_it_i.$ Notice that 
\begin{align*}
h &= e_1 \cdots e_m=\prod_{i=1}^m h_it_i\\
&=\left(
\prod_{i=1}^m \left( t_1\cdots t_{i-1} \right)
h_{i}
\left( t_1\cdots t_{i-1} \right)^{-1}
\right)
 \cdot
\left( t_1\cdots t_{m} \right)\\
\end{align*}
By Lemma~\ref{L:bdness S-arithm root-groups} 
the elements $\left( t_1\cdots t_{i-1} \right)h_{i}\left( t_1\cdots t_{i-1} \right)^{-1}$ 
are bounded independently of $h$, also $m$ is independent of $h$, thus the element 
$$\prod_{i=1}^m (t_1\cdots t_{i-1})h_{i}(t_1\cdots t_{i-1})^{-1}$$ 
is bounded independently of $h$. 
Furthermore, $\Phi$ is finite, all the sets $T_{\alpha}$ are finite, thus there
are only finitely many possibilities for the product $t_1\cdots t_{m}$
(independently of $h$) and hence $h$ itself is uniformly bounded.
This finishes the proof for the irreducible case.

In the general case let
$\Phi_1,\dots,\Phi_m$ be the irreducible components of $\Phi$
and let $G_i = G(\Phi_i,\C O_S)$.
Then the group $G$ admits the following finite central extension 
$$
1\to Z\to\prod_{i=1}^m G_i\stackrel{\pi}\longrightarrow G\to 1,
$$
where $Z\subset Z\left(\prod_{i=1}^m G_i\right)$ is finite.
Hence $H$ admits the following finite central extension
$$
1\to Z\cap\pi^{-1}(H)\to\pi^{-1}(H)\to H\to 1. 
$$
Since the quotient of a bounded group is bounded, it suffices to show
that $\pi^{-1}(H)$ is bounded.

Consider the subgroups $H_i:=\pi^{-1}(H)\cap G_i$ for $1\leq i\leq m$ and let
$H':=\prod_{i=1}^m H_i.$ Since $H_i$ is of finite index in $G_i$ for each
$i=1,\ldots, m$, $H'$ has finite index in $\pi^{-1}(H)$. Moreover, each $H_i$
is bounded, according to the first part of the proof which implies that $H'$ is
bounded. Consequently, $\pi^{-1}(H)$ is bounded which finishes the proof.
\qed

\begin{remark}
The assumption on the components of $\Phi$ to have rank at least~$2$ is
necessary. Namely the group $\OP{SL}_2(\B Z)=\B Z/4*_{\B Z/2}\B Z/6$ 
is known to be unbounded.
\end{remark}

\bibliography{/home/kedra/sync/bibliography}

\def\polhk#1{\setbox0=\hbox{#1}{\ooalign{\hidewidth
  \lower1.5ex\hbox{`}\hidewidth\crcr\unhbox0}}}
  \def\polhk#1{\setbox0=\hbox{#1}{\ooalign{\hidewidth
  \lower1.5ex\hbox{`}\hidewidth\crcr\unhbox0}}}
  \def\polhk#1{\setbox0=\hbox{#1}{\ooalign{\hidewidth
  \lower1.5ex\hbox{`}\hidewidth\crcr\unhbox0}}}
  \def\polhk#1{\setbox0=\hbox{#1}{\ooalign{\hidewidth
  \lower1.5ex\hbox{`}\hidewidth\crcr\unhbox0}}}
  \def\polhk#1{\setbox0=\hbox{#1}{\ooalign{\hidewidth
  \lower1.5ex\hbox{`}\hidewidth\crcr\unhbox0}}}
  \def\polhk#1{\setbox0=\hbox{#1}{\ooalign{\hidewidth
  \lower1.5ex\hbox{`}\hidewidth\crcr\unhbox0}}} \def\cprime{$'$}
\begin{thebibliography}{1}

\bibitem{eom-chevalley}
Chevalley group.
\newblock {\em Encyclopedia of Mathematics}.
\newblock Available at {\tt
  https://www.encyclopediaofmath.org/index.php/Chevalley\_group}.

\bibitem{MR2819193}
{\'S}wiatos{\l}aw~R. Gal and Jarek K\k{e}dra.
\newblock On bi-invariant word metrics.
\newblock {\em J. Topol. Anal.}, 3(2):161--175, 2011.

\bibitem{MR0396773}
James~E. Humphreys.
\newblock {\em Linear algebraic groups, corrected fifth printing}.
\newblock Springer-Verlag, New York-Heidelberg, 1975.
\newblock Graduate Texts in Mathematics, No. 21.

\bibitem{MR1090825}
G.~A. Margulis.
\newblock {\em Discrete subgroups of semisimple {L}ie groups}, volume~17 of
  {\em Ergebnisse der Mathematik und ihrer Grenzgebiete (3) [Results in
  Mathematics and Related Areas (3)]}.
\newblock Springer-Verlag, Berlin, 1991.

\bibitem{MR3616493}
Robert Steinberg.
\newblock {\em Lectures on {C}hevalley groups}, volume~66 of {\em University
  Lecture Series}.
\newblock American Mathematical Society, Providence, RI, 2016.
\newblock Notes prepared by John Faulkner and Robert Wilson, Revised and
  corrected edition of the 1968 original [ MR0466335], With a foreword by
  Robert R. Snapp.

\bibitem{MR1044049}
O.~I. Tavgen{\cprime}.
\newblock Bounded generability of {C}hevalley groups over rings of
  {$S$}-integer algebraic numbers.
\newblock {\em Izv. Akad. Nauk SSSR Ser. Mat.}, 54(1):97--122, 221--222, 1990.

\end{thebibliography}
\bibliographystyle{plain}

\end{document}